\newtheorem{theorem}{Theorem}[section]
\newtheorem{proposition}[theorem]{Proposition}
\newtheorem{corollary}[theorem]{Corollary}
\theoremstyle{remark}
\newtheorem{definition}[theorem]{Definition}
\numberwithin{equation}{section}
\begin{document}

\title[
Minimal unknotting sequences of Reidemeister moves 
]{
Minimal unknotting sequences of Reidemeister moves
containing unmatched RII moves
}

\author{Chuichiro Hayashi, Miwa Hayashi, Minori Sawada and Sayaka Yamada}

\date{\today}

\thanks{The first author is partially supported
by Grant-in-Aid for Scientific Research (No. 22540101),
Ministry of Education, Science, Sports and Technology, Japan.}

\begin{abstract}
 Arnold introduced invariants $J^+$, $J^-$ and $St$
for generic planar curves. 
 It is known that both $J^+ /2 + St$ and $J^- /2 + St$ are invariants
for generic spherical curves.
 Applying these invariants to underlying curves of knot diagrams,
we can obtain lower bounds for the number of Reidemeister moves for uknotting.
 $J^- /2 + St$ works well for unmatched RII moves.
 However, it works only by halves for RI moves.
 Let $w$ denote the writhe for a knot diagram.
 We show that $J^- /2 + St \pm w/2$ works well also for RI moves,
and demonstrate that it gives a precise estimation 
for a certain knot diagram of the unknot
with the underlying curve 
$r = 2 + \cos (n \theta/(n+1)),\ (0 \le \theta \le 2(n+1)\pi$).
\\
{\it Mathematics Subject Classification 2010:}$\ $ 57M25.\\
{\it Keywords:}$\ $
knot diagram, Reidemeister move, Arnold invariant, writhe
\end{abstract}

\maketitle

\section{Introduction}

 In this paper, all the knots are assumed to be oriented.
 A {\it Reidemeister move} is a local move of a knot diagram
as in Figure \ref{fig:Reid123}.
 An RI (resp. II) move
creates or deletes a monogon face (resp. a bigon face).
 An RII move is called {\it matched} or {\it unmatched} with respect to the orientation of the knot
as shown in Figure \ref{fig:matched}.
 An RIII move is performed on a $3$-gon face,
deleting it and creating a new one.
 Any such move does not change the knot type.
 As Alexander and Briggs \cite{AB} and Reidemeister \cite{R} showed,
for any pair of diagrams $D_1$, $D_2$ which represent the same knot type,
there is a finite sequence of Reidemeister moves
which deforms $D_1$ to $D_2$.

\begin{figure}[htbp]
\begin{center}
\includegraphics[width=10cm]{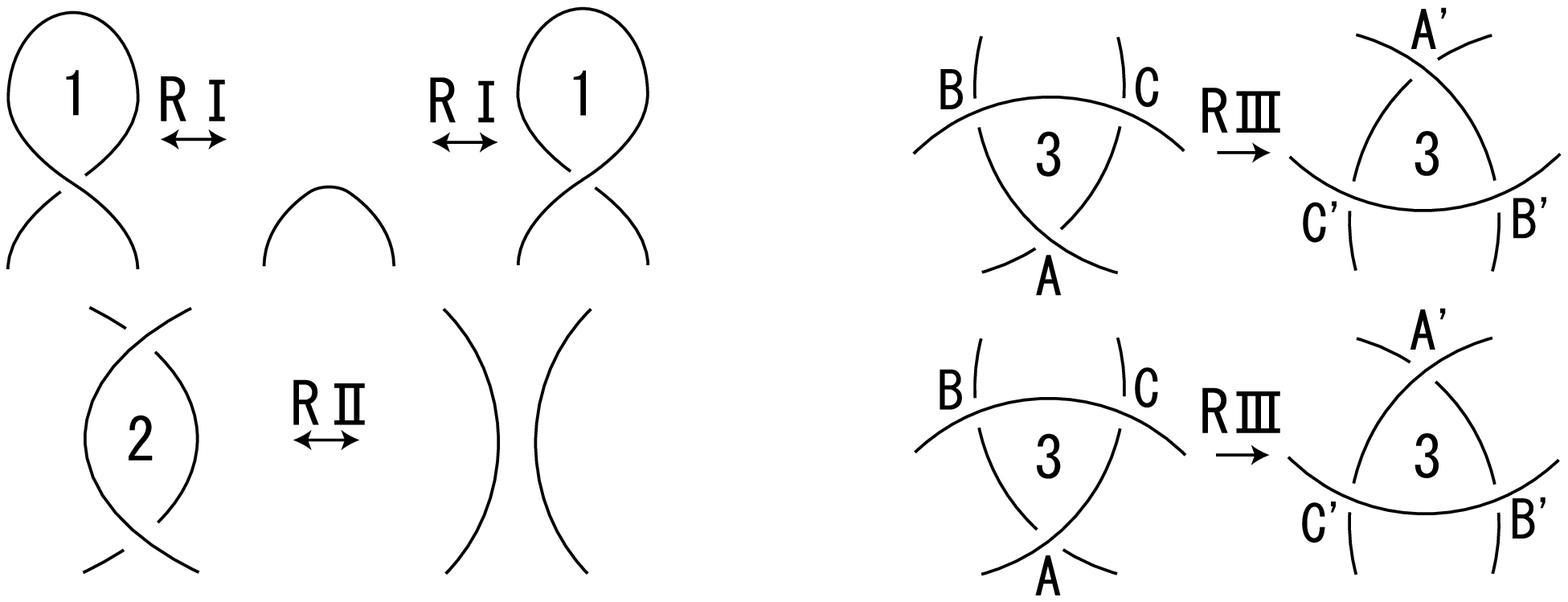}
\end{center}
\caption{}
\label{fig:Reid123}
\end{figure} 

\begin{figure}[htbp]
\begin{center}
\includegraphics[width=5cm]{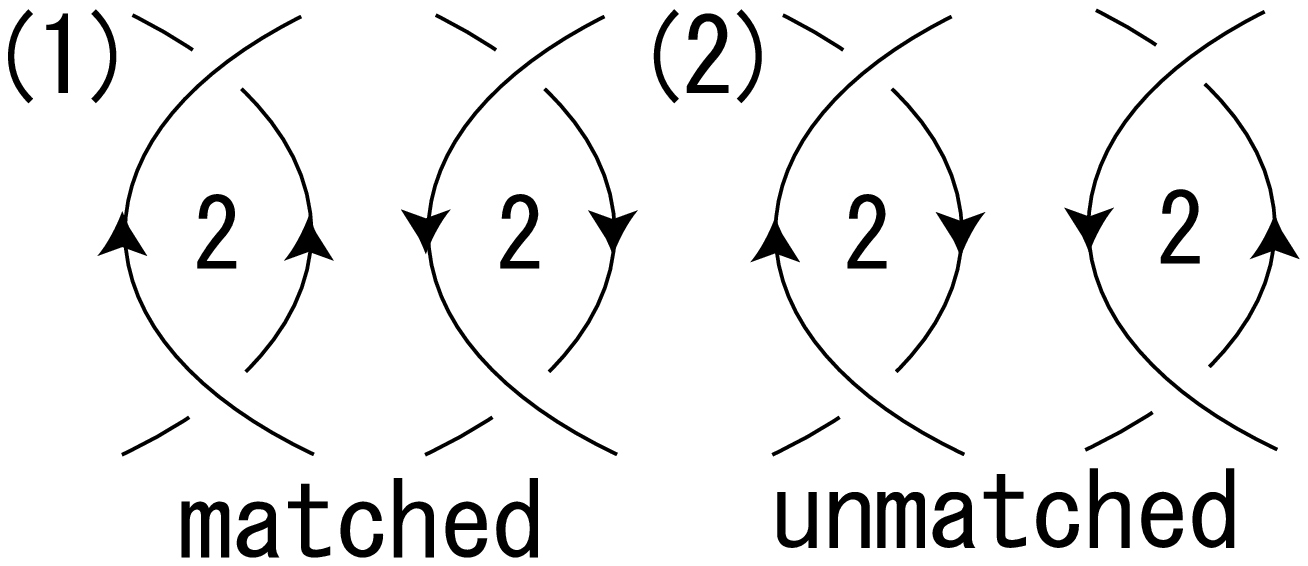}
\end{center}
\caption{}
\label{fig:matched}
\end{figure} 

 Necessity of Reidemeister moves of type II and III is studied 
in \cite{O}, \cite{M} and \cite{Hg}.
 In \cite{H}, 
the knot diagram invariant cowrithe is introduced,
and it gives a lower bound for the number of matched RII and RIII moves.
 In \cite{CESS},
Carter, Elhamdadi, Saito and Satoh
gave a lower bound for the number of RIII moves
by using extended n-colorings of knot diagrams in ${\Bbb R}^2$.
 Hass and Nowik
introduced a certain knot diagram invariant
by using smoothing and linking number
in \cite{HN1},
and gave 
in \cite{HN2}
an example of an infinite sequence
of diagrams of the trivial knot
such that 
the $n$-th one
has $7n-1$ crossings,
can be unknotted by $2n^2+3n$ Reidemeister moves,
and needs at least $2n^2+3n-2$ Reidemeister moves 
for being unknotted.
 Using cowrithe, it is shown in \cite{HH}
that a certain sequence of Reidemeister moves
bringing $D(n+1,n)$ to $D(n,n+1)$ is minimal,
where $D(p,q)$ denotes the usual diagram of the $(p,q)$-torus knot.
 In the above papers \cite{HN2} and \cite{HH},
the sequences of Reidemeister moves do not contain unmatched RII moves.
 It is not easy
to estimate the number of unmatched RII moves needed for unknotting.
 In this paper, we show
that a certain unknotting sequence of Reidemeister moves
containing unmatched RII moves
is minimal,
using the writhe 
and the Arnold invariants of the underlying spherical curve,
the knot diagram with 
over-under informations at the crossings forgotten.

\begin{figure}[htbp]
\begin{center}
\includegraphics[width=10cm]{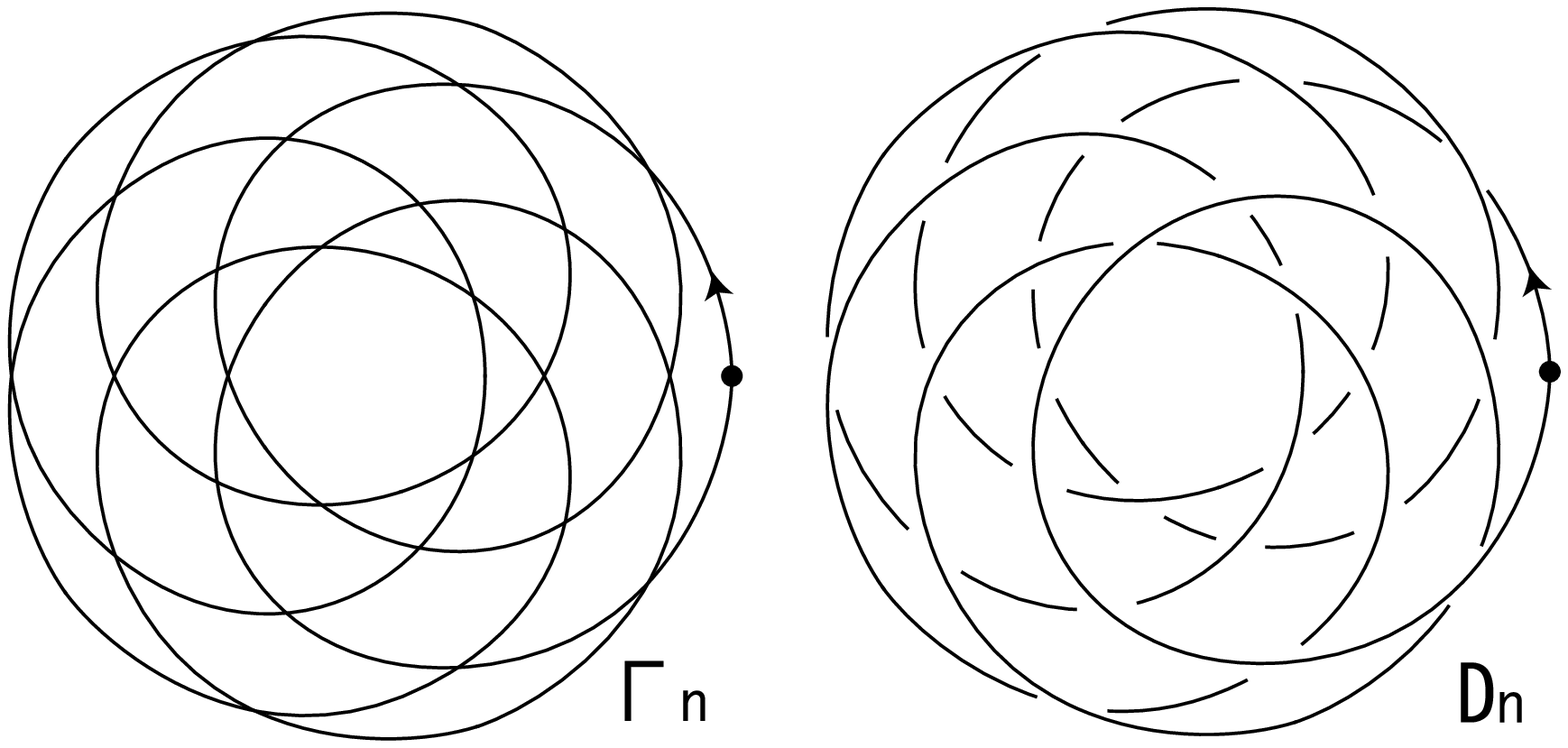}
\end{center}
\caption{}
\label{fig:Gamma_n}
\end{figure}

 Let $n$ be an integer larger than or equal to $2$.
 As the underlying spherical curve of a knot diagram,
we consider $\Gamma_n$ as shown in Figure \ref{fig:Gamma_n}, where $n=5$. 
 We regard the $2$-sphere $S^2$ as ${\Bbb R}^2 \cup \{ \infty \}$.
 For an integer $n$ larger than or equal to $2$,
$\Gamma_n$ is given by the equation 
$r = 2 + \cos (n \theta/(n+1))$, $(0 \le \theta \le 2(n+1)\pi)$
with respect to the polar coordinates $(r, \theta)$ on the plane.
 The curve $\Gamma_n$ has an $n$-gonal face at center,
surrounded by a cycle of $n$ trigonnal faces
surrounded by $n-2$ cycles of $n$ quadrilateral faces
surrounded by a cycle of $n$ trigonnal faces.
 The outermost region of $\Gamma_n$ is an $n$-gonal face.
 We set the base point $p$ to be $(r,\theta)=(3,0)=(3,2(n+1)\pi)$,
and give $\Gamma_n$ an orientation 
in the direction of which $\theta$ increases.
 The knot diagram $D_n$ is obtained from $\Gamma_n$
by giving 
over-under informations at all double points
so that they are ascending as below.
 Every crossing is composed of two subarcs of the knot.
 When we travel along the knot, 
staring at the base point
and going in the direction of the orientation,
we meet the first subarc and then the second one.
 In the diagram $D_n$
the second subarc goes over the first one.
 Thus $D_n$ represents the trivial knot.

 This knot diagram $D_n$ is also obtained 
from the usual diagram of the $(n+1, n)$-torus knot $T(n+1,n)$
by changing crossings
so that $D_n$ is ascending.
 The usual diagram of $T(n+1,n)$ is the closure of the $(n+1)$-braid
$(\sigma_1^{-1}\sigma_2^{-1}\cdots\sigma_n^{-1})^n$,
while $D_n$ is the closed braid of the $(n+1)$-braid below.
\begin{center}
$(\sigma_1^{-1}\sigma_2^{-1}
    \cdots\sigma_{n-2}^{-1}\sigma_{n-1}^{-1}\sigma_n^{-1})
 (\sigma_1^{-1}\sigma_2^{-1}
    \cdots\sigma_{n-2}^{-1}\sigma_{n-1}^{-1}\sigma_n)$
$\ \ \ \ \ \ \ \ \ \ \ $ \\
$\ \ \ \ \ \ \ \ \ \ \ $
$(\sigma_1^{-1}\sigma_2^{-1}
    \cdots\sigma_{n-2}^{-1}\sigma_{n-1}\sigma_n)
 \cdots
 (\sigma_1^{-1}\sigma_2
    \cdots\sigma_{n-2}\sigma_{n-1}\sigma_n)$
\end{center}
 In this braid,
the $j$-th strand goes over the $i$-th strand if $i < j$.

\begin{theorem}\label{theorem:Dn}
 For any integer $n$ larger than or equal to $3$,
the knot digram $D_n$ of the trivial knot
can be deformed to the trivial diagram with no crossing
by a sequence of $n(n^2+5)/6$ Reidemeister moves,
which consists of 
${}_n C_1 = n$ RI moves deleting positive crossings, 
${}_n C_2 = (n-1)n/2$ unmatched RII moves deleting bigons
and ${}_n C_3 = (n-2)(n-1)n/6$ positive RIII moves.
 Moreover, 
any sequence of Reidemeister moves bringing $D_n$ to the trivial diagram
must contain 
at least $n(n^2+5)/6$
RI moves deleting positive crossings,
unmatched RII move deleting bigons
or 
positive RIII moves.
 Hence, the above sequence is minimal.
\end{theorem}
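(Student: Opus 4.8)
The plan is to prove the two assertions separately: an explicit construction for the upper bound, and a lower bound obtained from the additive invariant $\Phi := J^-/2 + St \pm w/2$ advertised in the abstract. The two bounds meet at $N := n(n^2+5)/6$, which forces minimality.

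For the construction I would exploit the braid description of $D_n$. Reading $D_n$ as the closed $(n+1)$-braid displayed above, its $n^2$ crossings split into $n$ blocks and the diagram is ascending. The binomial shape of the count, $\binom{n}{1}$, $\binom{n}{2}$, $\binom{n}{3}$, is the signature of untangling $n$ mutually crossing strands: one first pushes triples of strands past one another, then frees pairs, then undoes single loops. Concretely, I expect each of the $\binom{n}{3}$ unordered triples of strands to be untangled by exactly one RIII move, each of the $\binom{n}{2}$ pairs to be freed by exactly one RII move removing the bigon it bounds, and each of the $n$ surviving kinks to be removed by one RI move. I would then check that, under the given orientation and the second-over-first crossing rule, every RIII so used is positive, every RII deletes a bigon and is unmatched, and every RI deletes a positive crossing. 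Summing gives $n + \binom{n}{2} + \binom{n}{3} = N$ moves of exactly the three claimed kinds and of no other kind.

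For the lower bound I would first record, from the earlier sections, the change of $\Phi$ under one Reidemeister move. The role of the term $\pm w/2$ is that, although $J^-/2 + St$ jumps only by a half under an RI move, the writhe (unchanged by RII and RIII, changing by $\pm 1$ under RI) exactly compensates, so that the combined quantity $\Phi$ changes by $-1$ under each of the three good moves (RI deleting a positive crossing, unmatched RII deleting a bigon, positive RIII), by $+1$ under each of their inverses, and by $0$ under every other elementary move (RI on a negative crossing, matched RII, negative RIII, and the corresponding creations). Granting this, let $a$, $b$, $c$ count respectively the good moves, their inverses, and all remaining moves in an arbitrary sequence carrying $D_n$ to the trivial diagram $O$. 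Then $\Phi(O) - \Phi(D_n) = -a + b$, whence $a = b + \big(\Phi(D_n) - \Phi(O)\big) \ge \Phi(D_n) - \Phi(O)$, and it remains only to evaluate the right-hand side.

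Thus the crux is the computation $\Phi(D_n) - \Phi(O) = N$. Since the trivial diagram has $J^- = St = w = 0$ in the chosen normalization, this reduces to computing $J^-(\Gamma_n)$, $St(\Gamma_n)$ and $w(D_n)$ for the explicit curve. I would read these off from the cellular structure described in the introduction: the central $n$-gon, the two cycles of $n$ triangles, and the $n-2$ cycles of $n$ quadrilaterals determine all the triple-point and self-tangency data, while the ascending convention fixes the writhe. I expect the main obstacle to be precisely this bookkeeping: pinning down the signs (which self-tangencies are inverse, which triple-point moves are positive, and the correct sign in $\pm w/2$) so that each per-move change is exactly $-1$ and the totals come out to $N$ on the nose. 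Once $\Phi(D_n) - \Phi(O) = N$ is verified, the inequality $a \ge N$ shows every sequence uses at least $N$ good moves, hence at least $N$ moves in all; since the sequence of Part 1 realizes exactly $N$ moves, all of them good, it is minimal.
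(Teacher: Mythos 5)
Your overall strategy --- an explicit sequence realizing $N = n(n^2+5)/6$ moves, plus a lower bound from the monotonicity of $J^-/2+St\pm w/2$ --- is exactly the paper's, and your reduction of the lower bound to the single equality $\bigl(J^-/2+St-w/2\bigr)(D_n)-\bigl(J^-/2+St-w/2\bigr)(O)=\mp N$ is the right reduction. But the step you flag as ``the crux'' is where your plan diverges from the paper and where it would stall. You propose to evaluate $J^-(\bar{D_n})$ and $St(\bar{D_n})$ directly from the face structure of $\Gamma_n$; the Arnold invariants cannot simply be read off from a list of faces, and you give no formula (one would need, e.g., Polyak's Gauss-diagram formulae) nor any way to organize that computation. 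The paper never performs such a computation: it observes that the very construction used for the upper bound, read from the trivial diagram to $D_n$, consists of ${}_nC_1$ RI moves creating positive crossings, ${}_nC_2$ unmatched RII moves creating bigons and ${}_nC_3$ negative RIII moves, each of which decreases $J^-/2+St-w/2$ by exactly $1$ by Theorem \ref{theorem:J^-/2+St+-w/2}; since the trivial diagram has value $0$, this yields $\bigl(J^-/2+St-w/2\bigr)(D_n)=-N$ with no further work. In other words, the construction and the invariant computation are one and the same step, not two independent tasks. Relatedly, your construction is only a plausibility argument (``I expect each triple to be untangled by exactly one RIII move''); the actual content of the paper's Section 3 is an induction on $n$ that deforms a single strand $\gamma_n$ into $\mu_n$ using one RI move, ${}_{n-1}C_1$ RII moves and ${}_{n-1}C_2$ RIII moves, and concludes by the rule ${}_{n-1}C_m+{}_{n-1}C_{m-1}={}_nC_m$.

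There is also an internal inconsistency in your table of per-move changes: you list negative RIII both among ``the inverses of the good moves'' (change $+1$) and among the moves with change $0$. For $J^-/2+St-w/2$ the correct statement (Theorem \ref{theorem:J^-/2+St+-w/2}) is that the only neutral moves are RI moves involving negative crossings and matched RII moves; every RIII move changes the invariant by $\pm1$ according to its sign, and your global sign is flipped --- the three ``good'' unknotting moves each \emph{increase} $J^-/2+St-w/2$ by $1$, so that the invariant of $D_n$ is $-N<0$ rather than $+N$. None of this is fatal in itself --- it is the bookkeeping you yourself defer --- but combined with the missing evaluation of the invariant it means the lower bound is not yet established by your argument.
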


 To prove the above theorem,
we use the knot diagram invariant $J^-/2 + St \pm w/2$,
where $J^-$ and $St$ are the Arnold invariants for plane curves,
and $w$ is the writhe.
 We consider the changes of this invariant under Reidemeister moves in Section 2
after recalling the definitions 
of the Arnold invariants.
 Theorem \ref{theorem:Dn} is proved in Section 3.


\section{knot diagram invariants}

 First, we recall the definition of the Arold invariants.
 A {\it plane curve} is a smooth immersion of the oriented circle $S^1$
to the plane ${\Bbb R}^2$.
 It is {\it generic}
if it has only a finite number of multiple points,
and they are transverse double points.

\begin{figure}[htbp]
\begin{center}
\includegraphics[width=11cm]{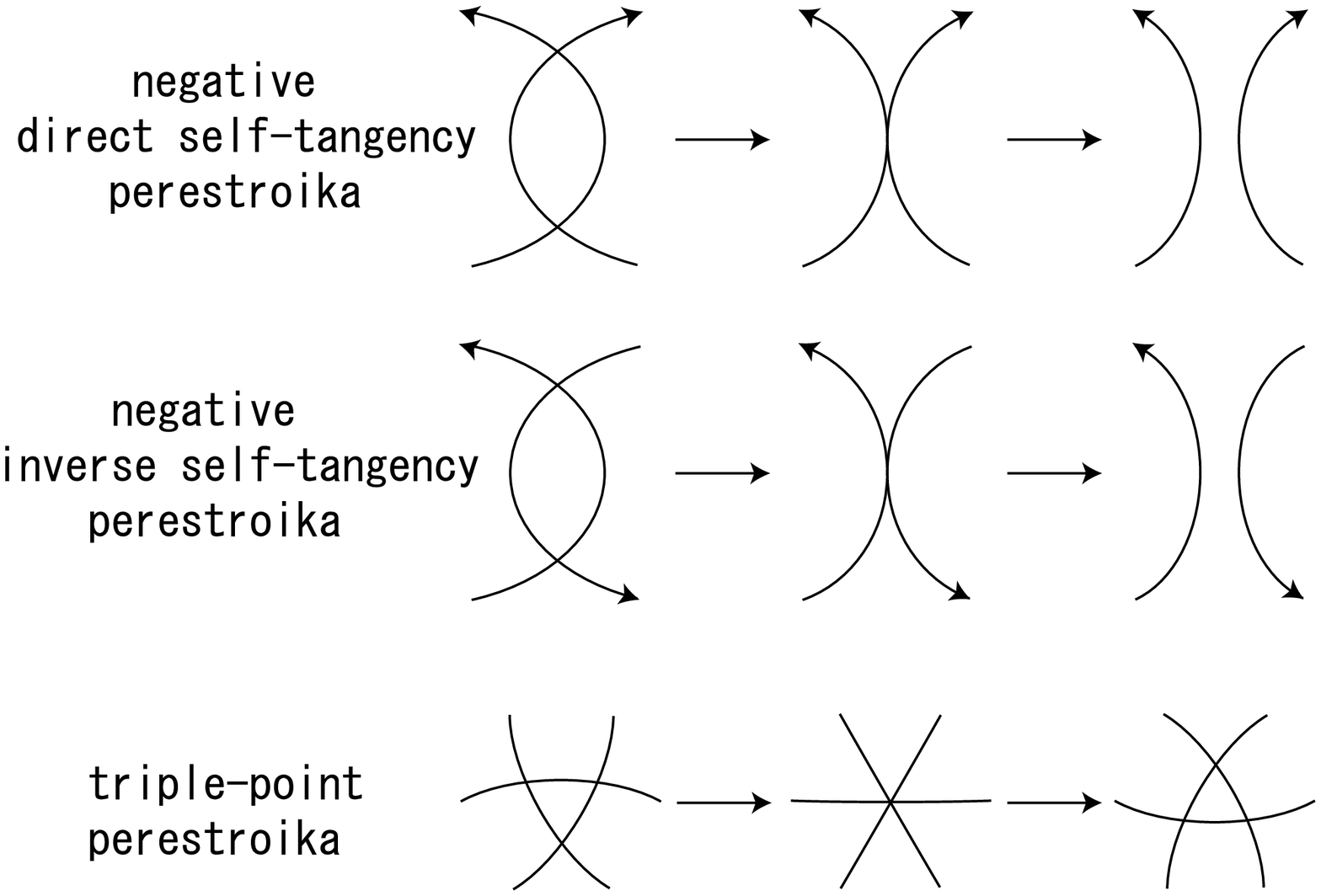}
\end{center}
\caption{}
\label{fig:perestroika}
\end{figure} 

\begin{figure}[htbp]
\begin{center}
\includegraphics[width=9cm]{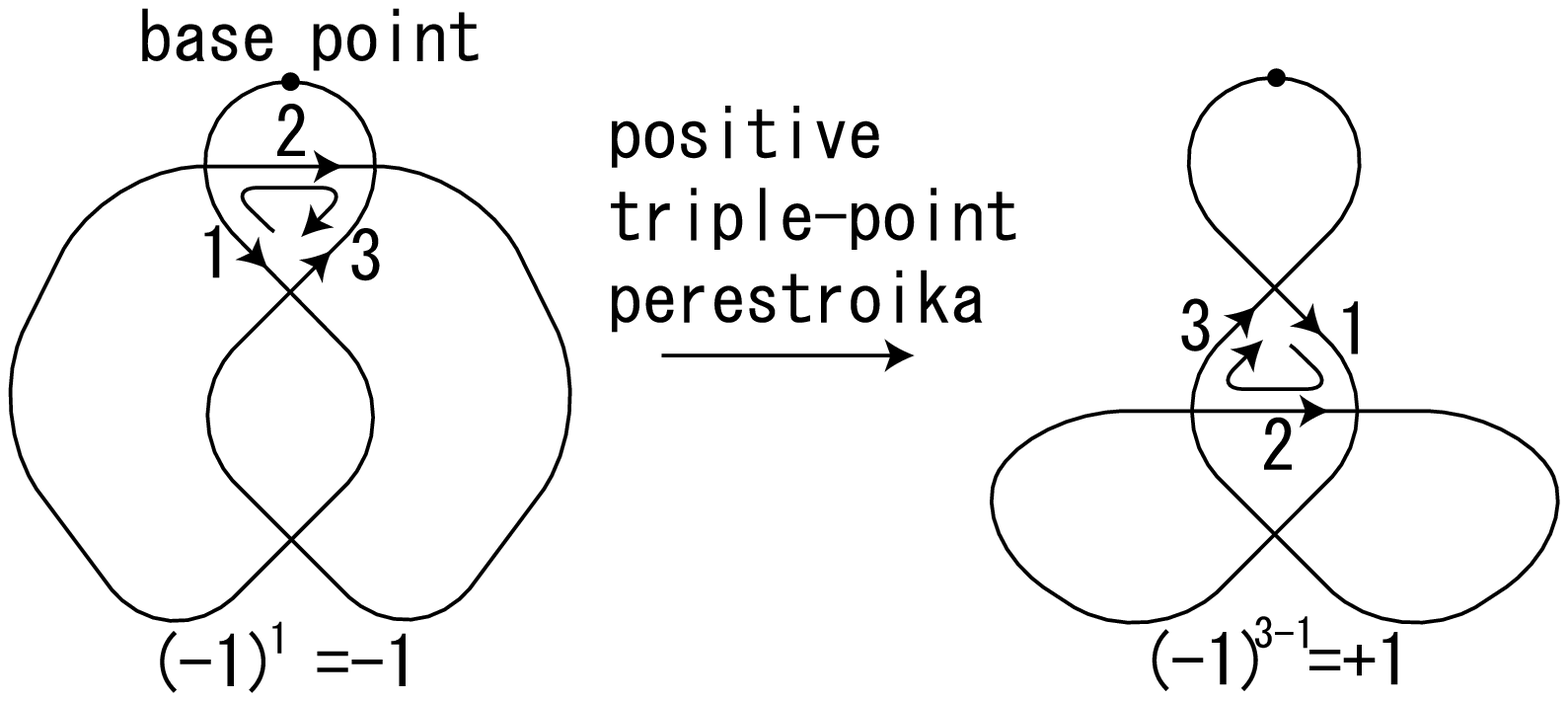}
\end{center}
\caption{}
\label{fig:SignTriple}
\end{figure} 

 When a knot diagram in the plane is deformed by an RII move,
a {\it self-tangency perestroika} occurs
on the underlying plane curve.
 A self-tangency perestroika is called 
{\it positive} (resp. {\it negative}) 
if it creates (resp. {\it deletes}) a bigon face,
and called {\it direct} (resp. {\it inverse})
if the corresponding RII move is matched (resp. unmatched).
 See Figure \ref{fig:perestroika}.
 When two knot diagrams are connected by an RIII move,
then their underlying plane curves are connected
by a {\it triple-point perestroika}.
 The {\it sign} of a triple-point perestroika is determined 
by the sign of the created triangle face
of the plane curve after the perestroika.
 Let $\Delta$ be a triangle face of a plane curve $\Gamma$.
 We take a base point $p$ on $\Gamma$
so that it is disjoint from $\Delta$.
 Let $e_1, e_2, e_3$ be the edges of $\Delta$,
where they are numbered 
so that they appear in this order
when we go around $\Gamma$ once from $p$ to $p$
in the direction of the orientation of $\Gamma$.
 We can orient the boundary circle $\partial \Delta$
so that we meet $e_1$, $e_2$ and $e_3$ in this order
when we go around $\partial \Delta$ in the direction of its orientation.
 Let $q$ be the number of the edges among $e_1$, $e_2$ and $e_3$
on which the orientation induced from $\Gamma$
matches that from $\partial \Delta$.
 Then the {\it sign} of $\Delta$ is defined by $(-1)^q$.
 Note that changing the base point does not affect the sign of $\Delta$.
 It can be easily seen 
that the triangle faces deleted and created by a triple-point perestroika
have opposite signs.
 See Figure \ref{fig:SignTriple},
where an example of a positive triple-point perestroika is described.

\begin{figure}[htbp]
\begin{center}
\includegraphics[width=10cm]{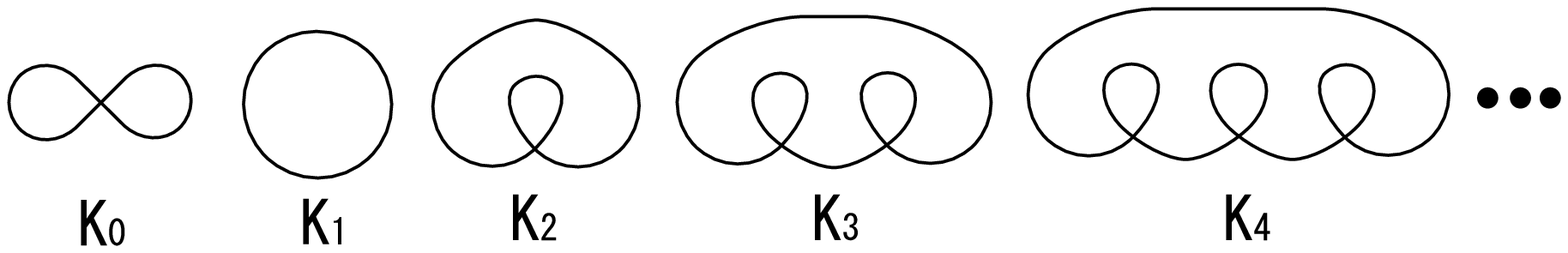}
\end{center}
\caption{}
\label{fig:Ki}
\end{figure}

 Arnold showed in \cite{A}
that there are invariants $J^+, J^-$ and $St$ for plane curves
as below.
 See also \cite{P},
where Polyak gave formulae 
calculating $J^+, J^-$ and $St$ via Gauss diagrams.

\begin{definition}\label{definition:Arnold}
\begin{enumerate}
\item[(1)] 
$J^+$, $J^-$ and $St$ are independent 
of the choice of orientation of a plane curve.
\item[(2)] 
$J^+$ does not change 
under an inverse self-tangency or triple-point perestroika 
but increases by $2$ under a positive direct self-tangency perestroika.
\item[(3)] 
$J^-$ does not change 
under a direct self-tangency or triple-point perestroika 
but decreases by $2$ under a positive inverse self-tangency perestroika.
\item[(4)] 
$St$ does not change under a self-tangency perestroika
but increases by $1$ under a positive triple-point perestroika.
\item[(5)] 
For the plane curves $K_i$, $i \in {\Bbb N} \cup \{ 0 \}$ 
depicted in Figure \ref{fig:Ki},\\
$J^+(K_0)=0, \ J^-(K_0)=-1, \ St(K_0)=0$, \\
$J^+(K_{i+1})=-2i, \ J^-(K_{i+1})=-3i, \ St(K_{i+1})=i$, where $i \ge 0$.
\end{enumerate}
\end{definition}

 Note that $K_i$ has Whitney index (or widing number) $+i$ or $-i$
according to a choice of orientation of $K_i$.
 (The Whitney index of a plane curve $\Gamma$ is calculated as below.
 Smoothing (cutting and pasting) all the double points with respect to the orientation of $\Gamma$,
we obtain disjoint union of oriented circles with no double points.
 Then the index is the number of circles oriented anti-clockwise
minus the number of circles oriented clockwise.)
 Whitney showed in \cite{W} 
that two plane curves are connected by a smooth homotopy
if and only if they are of the same index.
 Two homotopic plane curves are connected 
by a sequence of self-tangency perestroikas and triple-point perestroikas.
 
 Aicardi studied the invariant $J^+/2 + St$ in \cite{Ai}. 
 We regard the $2$-sphere $S^2$ as ${\Bbb R}^2 \cup \{ \infty \}$.
 Then $J^+ /2 + St$ and also $J^- /2 +St$ give invariants for generic spherical curves.
 In fact, they does not depend 
on the choice of the point at infinity $\infty$ in $S^2 -\Gamma$,
where $\Gamma$ is a generic spherical curve.
 This fact is also implied 
by the formulae $J^+ = J^- + n$ 
and $J^+(\Gamma) + 2St(\Gamma) = -2 <B_4, G_{\Gamma}>$ in 4.3 in \cite{P},
where $n$ denotes the number of double points
and the term $<B_4, G_{\Gamma}>$ depends 
only on the Gass diagram $G_{\Gamma}$ of the spherical curve $\Gamma$.
 We obtain $J^- (\Gamma)/2 + St(\Gamma) = -<B_4, G_{\Gamma}> -n/2$ from these formulae.
 (Note that 
$St(\Gamma)$ should be equal to
$\frac{1}{2}<-B_2+B_3+B_4> 
+
\displaystyle\frac{n-1}{4} + \displaystyle\frac{ind(\Gamma)^2}{4}$
in Theorem 1 in \cite{P}.)

\begin{figure}[htbp]
\begin{center}
\includegraphics[width=5cm]{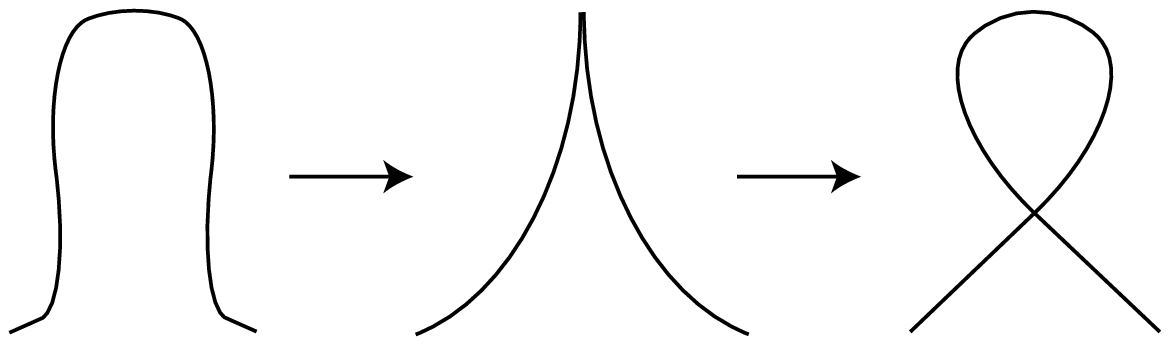}
\end{center}
\caption{}
\label{fig:cusp}
\end{figure}

 We consider changes of $J^+ /2 + St$ and $J^- /2 + St$
under a cusp perestroika as shown in Figure \ref{fig:cusp},
which can be lifted to an RI move on a knot diagram.
 The next proposition is probably well-known.
 It follows easily from Definition \ref{definition:Arnold}
and the above formulae in \cite{P}. 
 See also Proposition 3 in \cite{HN1}.

\begin{proposition}\label{proposition:J/2+St}{\rm (well-known)}
\begin{enumerate}
\item[(1)]
$J^+ /2 + St$ does not change 
under a cusp or inverse self-tangency perestroika,
but increases by $1$ under a positive direct self-tangency perestroika
or positive triple-point perestroika.
\item[(2)] 
$J^- /2 + St$ does not change
under a direct self-tangency perestroika,
but decreases by $1/2$ 
under a cusp perestroika creating a monogon,
by $1$ under a positive inverse self-tangency perestroika
or negative triple-point perestroika.
\end{enumerate}
\end{proposition}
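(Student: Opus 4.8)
The plan is to split the statement into the individual perestroikas and to read off the jump of $J^{\pm}/2 + St$ from the prescribed jumps of $J^{\pm}$ and $St$ tabulated in Definition \ref{definition:Arnold}. Every case except the cusp perestroika is then immediate additive bookkeeping, and the cusp case is the only one that needs the Gauss-diagram formulae recalled above from \cite{P}.

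First I would dispose of the self-tangency and triple-point cases purely by adding jumps. For (1): under an inverse self-tangency perestroika $J^+$ is unchanged by Definition \ref{definition:Arnold}(2) and $St$ is unchanged by (4), so $J^+/2 + St$ is unchanged; under a positive triple-point perestroika $J^+$ is unchanged by (2) while $St$ increases by $1$ by (4), so $J^+/2 + St$ increases by $1$; under a positive direct self-tangency perestroika $J^+$ increases by $2$ by (2) and $St$ is unchanged by (4), so $J^+/2 + St$ increases by $2/2 = 1$. For (2): under a direct self-tangency perestroika both $J^-$ (by (3)) and $St$ (by (4)) are unchanged; under a positive inverse self-tangency perestroika $J^-$ decreases by $2$ by (3) and $St$ is unchanged, so $J^-/2 + St$ decreases by $1$. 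For the negative triple-point perestroika I would invoke the reverse of a positive one: since the triangles created and deleted by a triple-point perestroika have opposite signs, the inverse of a positive triple-point perestroika is exactly a negative one, under which $J^-$ is still unchanged by (3) while $St$ decreases by $1$ by (4), giving a decrease of $1$ in $J^-/2 + St$.

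The only genuinely non-routine case is the cusp perestroika, and here I would use the formulae $J^+ = J^- + n$ and $J^-(\Gamma)/2 + St(\Gamma) = -<B_4,G_{\Gamma}> - n/2$ from \cite{P}, which together give $J^+(\Gamma)/2 + St(\Gamma) = -<B_4,G_{\Gamma}>$, where $n$ is the number of double points and $<B_4,G_{\Gamma}>$ depends only on the Gauss diagram. A cusp perestroika creating a monogon adds exactly one double point, so it raises $n$ by $1$; its two preimages on the parametrizing circle are mutually adjacent, so the new chord is unlinked from every other chord of $G_{\Gamma}$ and enters no $B_4$-configuration, whence $<B_4,G_{\Gamma}>$ is unchanged. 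Consequently $J^+/2 + St = -<B_4,G_{\Gamma}>$ is unchanged, proving the cusp assertion in (1), while $J^-/2 + St = -<B_4,G_{\Gamma}> - n/2$ drops by exactly $1/2$, proving the cusp assertion in (2).

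Finally, as a consistency check I would evaluate on the standard curves of Definition \ref{definition:Arnold}(5): there $(J^+/2 + St)(K_{i+1}) = -i + i = 0$ is constant, while $(J^-/2 + St)(K_{i+1}) = -3i/2 + i = -i/2$ decreases by $1/2$ as $i$ grows, each step $K_{i+1}\to K_{i+2}$ being a monogon-creating cusp; this matches the jumps computed above. The main obstacle is the correct bookkeeping of signs and directions — in particular identifying the reverse of a positive triple-point perestroika as a negative one, and verifying that the short chord introduced by a monogon cusp is unlinked and hence inert for $<B_4,G_{\Gamma}>$. Once these two points are settled, the proposition reduces to a direct addition of the tabulated jumps.
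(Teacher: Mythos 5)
Your argument is correct and follows exactly the route the paper indicates: the self-tangency and triple-point cases are additive bookkeeping from Definition~\ref{definition:Arnold} (including the observation that the reverse of a positive triple-point perestroika is a negative one), and the cusp case comes from $J^+=J^-+n$ and $J^-/2+St=-\langle B_4,G_\Gamma\rangle-n/2$, where the new short chord created by a monogon is unlinked from all others and so leaves $\langle B_4,G_\Gamma\rangle$ fixed. The paper itself offers no more than this sketch (citing \cite{P} and Proposition~3 of \cite{HN1}), so your write-up is a faithful and complete expansion of the intended proof.
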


 We can obtain lower bounds 
for the minimal number of Reidemeister moves 
connecting two knot diagrams in $S^2$
representing the same knot
by calculating these invariants 
of underlying spherical curves of the knot diagrams.
 In fact, 
as Hass and Nowik showed in Section 4 in \cite{HN1}, 
the cowrithe of a knot diagram is equal to $-\{ J^+ /2 + St - 4c_2 \}$,
where $c_2$ is the coefficient of $x^2$ of the Conway polynomial of the knot.
 Hence the estimation of the number of Reidemeister moves
by the cowrithe coincides with that by $J^+ /2 + St$.

\begin{figure}[htbp]
\begin{center}
\includegraphics[width=4cm]{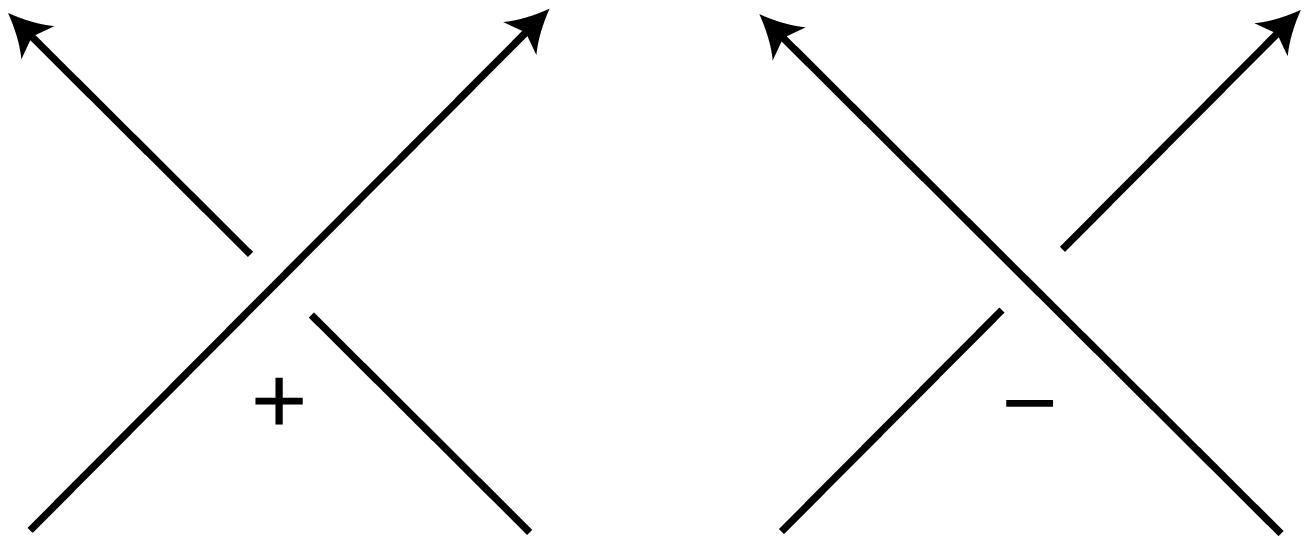}
\end{center}
\caption{}
\label{fig:sign}
\end{figure}

 The invariant $J^- /2 + St$ is sensitive 
to inverse self-tangency perestroikas,
and hence to unmatched RII moves.
 However, it reacts by halves to cusp moves (or RI moves).
 Hence we consider 
$J^-(\bar{D}) /2 + St(\bar{D}) \pm w(D)/2$,
where $D$ is a knot diagram in $S^2$,
$\bar{D}$ the underlying spherical curve,
and $w(D)$ the writhe of $D$.
 The {\it writhe} of a knot diagram $D$
is the sum of signs of all the crossings of $D$,
where the sign of a crossing is defined as shown in Figure \ref{fig:sign}.

 We call an RIII move {\it positive} (resp. {\it negative})
if it causes a positive (resp. negative) triple-point perestroika 
on the underlying spherical curve.

 The next theorem follows easily from Proposition \ref{proposition:J/2+St}
since the writhe does not change under an RII or RIII move
and increases (resp. decreases) by $1$ 
under an RI move creating a positive (resp. negative) crossing.

\begin{theorem}\label{theorem:J^-/2+St+-w/2}
$J^- /2 + St + w/2$ (resp. $J^- /2 + St - w/2$) does not change 
under an RI move creating a positive (resp. negative) crossing
or matched RII move, 
but decreases by $1$ 
under an RI move creating a negative (resp. positive) crossing,
unmatched RII move creating a bigon face
or negative RIII move.
\end{theorem}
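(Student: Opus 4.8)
The plan is to treat $J^-/2 + St \pm w/2$ as a sum of two pieces whose behavior under each Reidemeister move is already in hand, and simply to add the two contributions move by move. The first piece, $J^-/2 + St$, is controlled entirely by Proposition~\ref{proposition:J/2+St}(2): it is unchanged by a matched RII move (a direct self-tangency perestroika), drops by $1/2$ under an RI move creating a crossing (a cusp perestroika creating a monogon), and drops by $1$ under an unmatched RII move creating a bigon (a positive inverse self-tangency perestroika) or under a negative RIII move (a negative triple-point perestroika). The only thing left to supply is the effect of the writhe term, after which the theorem reduces to a short table.

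For the writhe I would first record the elementary fact, already asserted in the sentence preceding the theorem, that $w$ is insensitive to RII and RIII moves and changes by exactly $\pm 1$ under an RI move. For an RII move the two crossings bounding the bigon always carry opposite signs, whether the move is matched or unmatched, so their contributions cancel and $w$ is unchanged; for an RIII move the three crossings are merely repositioned and keep their signs, so again $w$ is unchanged. An RI move, by contrast, inserts or removes a single kink, whose crossing contributes $+1$ or $-1$ to $w$ according to Figure~\ref{fig:sign}; hence $w/2$ shifts by $+1/2$ for a created positive crossing and by $-1/2$ for a created negative crossing.

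Adding the two contributions then gives the result case by case. The decisive point is the RI move: on the underlying spherical curve it is the same cusp perestroika regardless of the over--under choice, so $J^-/2 + St$ falls by $1/2$ in both cases, while $w/2$ supplies $+1/2$ or $-1/2$ depending on the sign of the new crossing. For $J^-/2 + St + w/2$, a created positive crossing yields $-1/2 + 1/2 = 0$ and a created negative crossing yields $-1/2 - 1/2 = -1$; the matched RII move contributes $0 + 0$, whereas the unmatched bigon-creating RII move and the negative RIII move each contribute $-1 + 0 = -1$. This is exactly the asserted behavior, and the parenthetical statement for $J^-/2 + St - w/2$ follows verbatim after interchanging the roles of positive and negative crossings, since only the sign of the writhe term is reversed.

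The argument is essentially bookkeeping, so I expect no genuine obstacle, only the need to keep the sign and ``matched/unmatched'', ``positive/negative'' conventions aligned. Specifically, I would double-check that an RI move creating a crossing is indeed the monogon-creating cusp perestroika of Proposition~\ref{proposition:J/2+St}(2), that an unmatched bigon-creating RII move is a \emph{positive inverse} self-tangency perestroika, and that the two crossings of an RII bigon really do have opposite signs in both the matched and unmatched cases. Once these identifications are fixed, the four ``decreases by $1$'' moves and the two ``no change'' moves drop out immediately.
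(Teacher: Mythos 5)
Your proposal is correct and follows exactly the paper's own argument: the paper likewise derives the theorem by combining Proposition \ref{proposition:J/2+St}(2) with the observation that the writhe is unchanged by RII and RIII moves and shifts by $\pm 1$ under an RI move according to the sign of the created crossing. The case-by-case bookkeeping you carry out is the intended (and only needed) verification.
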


 The two formulae $J^+ = J^- + n$ in Section 4.3 in \cite{P},
and $x = 4c_2 - (J^+/2 + St)$ in Section 4 in \cite{HN1}
together imply
$x + n/2 \mp w/2 = 4c_2 - (J^-/2 + St \pm w/2)$,
where $x$ is the cowrithe. 
 Hence we obtain the next corollary.

\begin{corollary}
$x + n/2 - w/2$  (resp. $x + n/2 + w/2$) does not change 
under an RI move creating a positive (resp. negative) crossing
or matched RII move, 
but increases by $1$ 
under an RI move creating a negative (resp. positive) crossing,
unmatched RII move creating a bigon face
or negative RIII move.
\end{corollary}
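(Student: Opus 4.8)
The plan is to read the Corollary off the algebraic identity
$x + n/2 \mp w/2 = 4c_2 - (J^-/2 + St \pm w/2)$
established in the paragraph preceding the statement, combined with Theorem \ref{theorem:J^-/2+St+-w/2}. The single additional ingredient I need is that the coefficient $c_2$ of $x^2$ in the Conway polynomial is a knot invariant, and is therefore unchanged by every Reidemeister move, since such moves preserve the knot type. Granting this, along any Reidemeister move the quantity on the left of the identity changes by exactly the negative of the change in the invariant $J^-/2 + St \pm w/2$ appearing on the right.

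First I would fix the upper signs, so that the identity reads $x + n/2 - w/2 = 4c_2 - (J^-/2 + St + w/2)$. By Theorem \ref{theorem:J^-/2+St+-w/2}, the quantity $J^-/2 + St + w/2$ is unchanged under an RI move creating a positive crossing and under a matched RII move, and it decreases by $1$ under an RI move creating a negative crossing, an unmatched RII move creating a bigon, and a negative RIII move. Subtracting from the constant $4c_2$ reverses each of these changes, so that $x + n/2 - w/2$ is unchanged in the first family of moves and increases by $1$ in the second, which is precisely the principal clause of the Corollary.

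The parenthetical (resp.) clause is obtained identically after fixing the lower signs, so that the identity becomes $x + n/2 + w/2 = 4c_2 - (J^-/2 + St - w/2)$, and invoking the (resp.) clause of Theorem \ref{theorem:J^-/2+St+-w/2} governing $J^-/2 + St - w/2$. Since the whole argument amounts to transcribing the behaviour of $J^-/2 + St \pm w/2$ through a sign reversal, I do not expect any genuine obstacle. The only point requiring attention is the bookkeeping of signs: one must keep the $\mp$ and $\pm$ in the identity synchronised with the two (resp.) alternatives of the theorem, so that the positive-crossing and negative-crossing RI moves are correctly paired with the two versions of the invariant.
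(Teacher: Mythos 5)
Your proposal is correct and is exactly the paper's (implicit) argument: the authors derive the corollary by combining the identity $x + n/2 \mp w/2 = 4c_2 - (J^-/2 + St \pm w/2)$ with Theorem \ref{theorem:J^-/2+St+-w/2}, using that $4c_2$ is constant under Reidemeister moves. Your sign bookkeeping (upper signs pairing $x+n/2-w/2$ with $J^-/2+St+w/2$, and likewise for the lower signs) matches the paper's conventions.
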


 Note that $n/2 + w/2$ (resp. $n/2 - w/2$)
does not change 
under an RI move creating a negative (resp. positive) crossing,
but increases by $1$ 
under an RI move creating a positive (resp. negative) crossing.

\section{Minimal sequence of Reidemeister moves}

 We prove Theorem \ref{theorem:Dn} in this section.
 In the course of the proof, 
we obtain the next proposition.
 A knot diagram of the unknot rarely has the cowrithe with positive value.
 In fact, any knot diagram of the unknot with $8$ or less number of crossings has negative cowrithe.
 Note that $c_2 (D_n) = 0$ since $D_n$ represents the unknot.

\begin{proposition}$\ $
\newline
$J^-(\bar{D_n})/2 + St(\bar{D_n}) - w(D_n)/2 
= -({}_n C_1 + {}_n C_2 + {}_n C_3) = -n(n^2+5)/6$
\newline
$J^+(\bar{D_n})/2 + St(\bar{D_n}) = -x(D_n) = -{}_n C_3 = -(n-2)(n-1)n/6$
\end{proposition}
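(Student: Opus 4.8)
The plan is to compute each invariant at $D_n$ by following the explicit deformation of Theorem \ref{theorem:Dn}, which carries $D_n$ to the trivial diagram, and accumulating the per-move changes recorded in Proposition \ref{proposition:J/2+St} and Theorem \ref{theorem:J^-/2+St+-w/2}; since the invariants are known at the trivial diagram, this determines their values at $D_n$. Let $D_0$ denote the trivial diagram with no crossing. Its underlying spherical curve is a round circle, of Whitney index $\pm 1$, so by the remark following Definition \ref{definition:Arnold} it is the curve $K_1$; hence $J^+(\bar{D_0}) = J^-(\bar{D_0}) = 0$ and $St(\bar{D_0}) = 0$, while $w(D_0) = 0$. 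Thus $J^-(\bar{D_0})/2 + St(\bar{D_0}) - w(D_0)/2 = 0$ and $J^+(\bar{D_0})/2 + St(\bar{D_0}) = 0$. It is essential here that the base curve is $K_1$, not $K_0$: this is the normalization that makes the constant term vanish and yields the clean right-hand sides.

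First I would treat $I := J^-/2 + St - w/2$. By Theorem \ref{theorem:Dn} the diagram $D_n$ is carried to $D_0$ by a sequence consisting solely of RI moves deleting positive crossings, unmatched RII moves deleting bigons, and positive RIII moves; no matched RII, negative-crossing RI, or negative RIII move occurs. Each such move is the reverse of one that Theorem \ref{theorem:J^-/2+St+-w/2} records as decreasing $I$ by $1$ --- an RI creating a positive crossing, an unmatched RII creating a bigon, and a negative RIII, respectively (a positive RIII being the reverse of a negative one, since the two triangles of a triple-point perestroika carry opposite signs). Hence every move along the sequence raises $I$ by exactly $1$, so $I(D_0) = I(D_n) + ({}_n C_1 + {}_n C_2 + {}_n C_3)$. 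With $I(D_0) = 0$ this gives $I(D_n) = -({}_n C_1 + {}_n C_2 + {}_n C_3)$, and the closed form follows from ${}_n C_1 + {}_n C_2 + {}_n C_3 = n + n(n-1)/2 + n(n-1)(n-2)/6 = n(n^2+5)/6$.

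For the second formula I would track $J^+/2 + St$ instead. By Proposition \ref{proposition:J/2+St}(1) this quantity is unchanged under cusp perestroikas (hence under the RI moves) and under inverse self-tangency perestroikas (hence under the unmatched RII moves), while it increases by $1$ under each positive triple-point perestroika (hence under each positive RIII move). Along the same sequence this gives $(J^+/2 + St)(\bar{D_0}) = (J^+/2 + St)(\bar{D_n}) + {}_n C_3$, so $(J^+/2 + St)(\bar{D_n}) = -{}_n C_3 = -(n-2)(n-1)n/6$. Finally the relation $x = 4c_2 - (J^+/2 + St)$ recalled after Theorem \ref{theorem:J^-/2+St+-w/2}, together with $c_2(D_n) = 0$, gives $x(D_n) = -(J^+/2 + St)(\bar{D_n}) = {}_n C_3$, which is the asserted equality $-x(D_n) = J^+(\bar{D_n})/2 + St(\bar{D_n})$.

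The arithmetic is short; the real content, and the main obstacle, is establishing the explicit move sequence of Theorem \ref{theorem:Dn}, on which every step above rests. Once that sequence is in hand, what remains is pure bookkeeping: matching each move to the correct perestroika type (direct versus inverse, positive versus negative) so that the signs in Proposition \ref{proposition:J/2+St} and Theorem \ref{theorem:J^-/2+St+-w/2} are applied correctly, and correctly identifying the base circle as $K_1$.
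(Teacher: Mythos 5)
Your proposal is correct and is essentially the paper's own argument: the authors derive the Proposition in the course of proving Theorem \ref{theorem:Dn}, by running the explicit move sequence (in their case from the trivial diagram to $D_n$, so each RI creating a positive crossing, unmatched RII creating a bigon, and negative RIII decreases $J^-/2+St-w/2$ by $1$) and summing the per-move changes from Theorem \ref{theorem:J^-/2+St+-w/2}, exactly as you do in the reverse direction. Your identification of the base circle as $K_1$ and the use of $x=4c_2-(J^+/2+St)$ with $c_2(D_n)=0$ for the second formula also match the paper's normalization and cited relations.
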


\begin{proof}[Proof of Theorem \ref{theorem:Dn}]
 We first sketch the proof very roughly.
 The trivial knot diagram 
is the unit circle $S^1$ in $S^2 \cong {\Bbb R}^2 \cup \{ \infty \}$,
and it is the union of $n$ arcs $\gamma_1, \gamma_2, \cdots, \gamma_n$,
where $\gamma_i$ is given
by the equation below.
\begin{center}
$r = 1$, $\ (2(i-1)\pi/n \le \theta \le 2i\pi/n)$
\end{center}
 We apply RI moves creating a positive crossing 
${}_n C_1 = n$ times to the trivial knot diagram
so that each subarc $\gamma_i$ is deformed 
into a kink $\lambda_i$ with a positive crossing and a small monogon,
and so that the circle is deformed into a knot diagram
with the curve $K_n$ in Figure \ref{fig:Ki} 
being the underlying spherical curve.
 Let $I$ be a subset of $\{ 1,2, \cdots, n \}$,
and $K(I)$ the knot digram 
obtained from the circle $S^1$
by replacing $\gamma_i$ by $\lambda_i$ for all $i \in I$.
 We perform ${}_n C_2 = (n-1)n/2$ unmatched RII moves creating a bigon
and ${}_n C_3 = (n-2)(n-1)n/6$ negative RIII moves
so that 
the monogons of the kinks are enlarged,
that 
$\lambda_j$ goes over $\lambda_i$ if $i < j$,
that 
$K(\{ i,j \})$ is deformed to a diagram equivalent to $D_2$ 
for every pair of two distinct numbers $i,j$ in $\{ 1,2, \cdots, n \}$,
and that 
$K(\{ i,j,k\})$ is deformed to a diagram equivalent to $D_3$
for every triple of three 
distinct numbers $i,j,k$ 
in $\{ 1,2, \cdots, n \}$.
 Then the resulting knot diagram is $D_n$.
 This deformation and Theorem \ref{theorem:J^-/2+St+-w/2} show
that $J^-(\bar{D_n})/2+St(\bar{D_n})-w(D_n)/2 = -{}_n C_1 -{}_n C_2 -{}_n C_3$,
and the threorem follows.

 Now we describe the proof of the theorem in detail.
 Let $\mu_i$ be the subarc of the diagram $D_n$ given by the formula below.
\begin{center}
$r = 2 + \cos (n \theta /(n+1))$, 
$\ \ (2(i-1)(n+1)\pi/n \le \theta \le 2i(n+1)\pi/n)$
\end{center}
 Each arc $\lambda_i$ is going to be deformed to $\mu_i$.
 For a subset $I$ of $\{ 1,2, \cdots, n \}$,
let $D(I)$ denote the knot digram 
obtained from the circle $S^1$
by replacing $\gamma_i$ by $\mu_i$ for all $i \in I$.

\begin{figure}[htbp]
\begin{center}
\includegraphics[width=11cm]{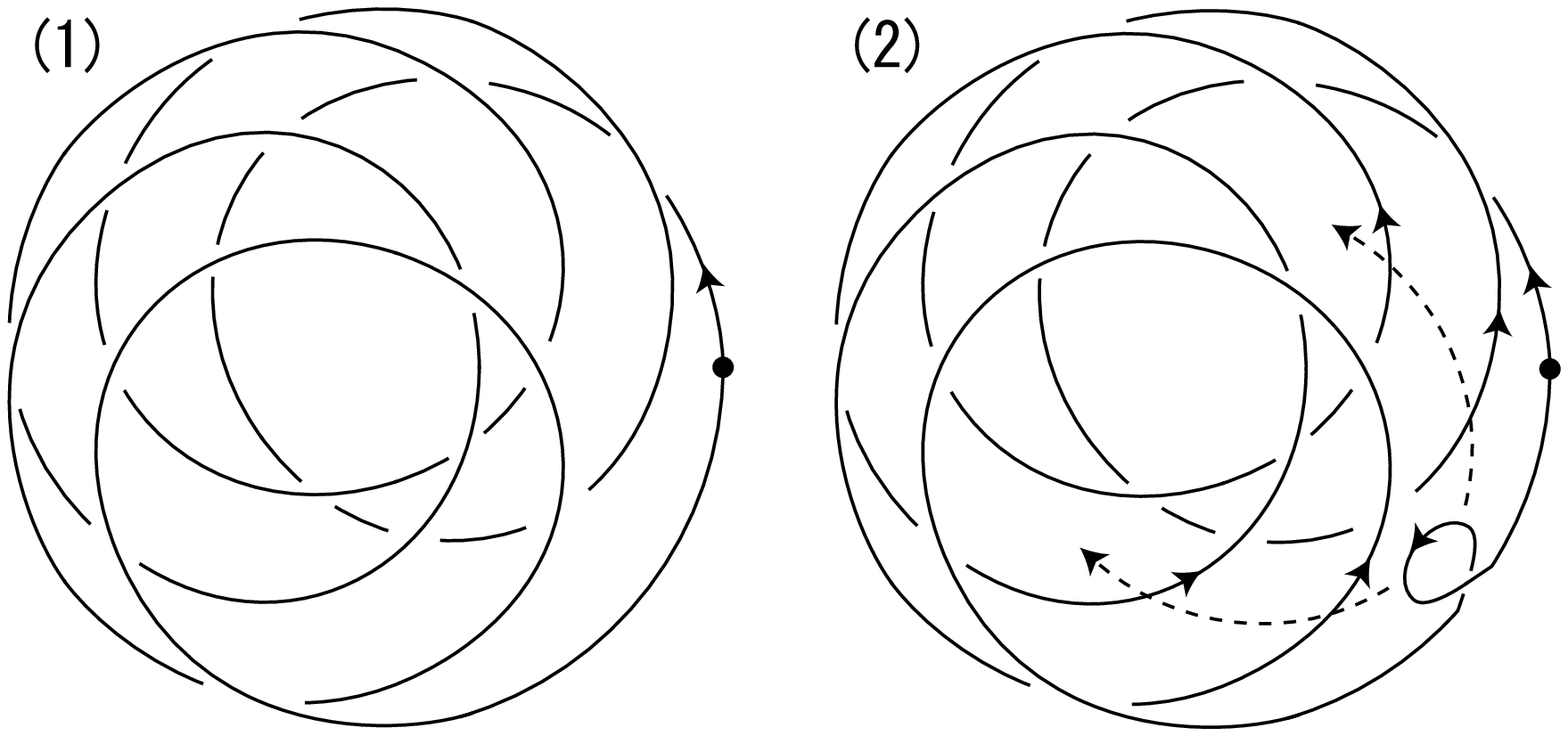}
\end{center}
\caption{}
\label{fig:deform1}
\end{figure} 

\begin{figure}[htbp]
\begin{center}
\includegraphics[width=5cm]{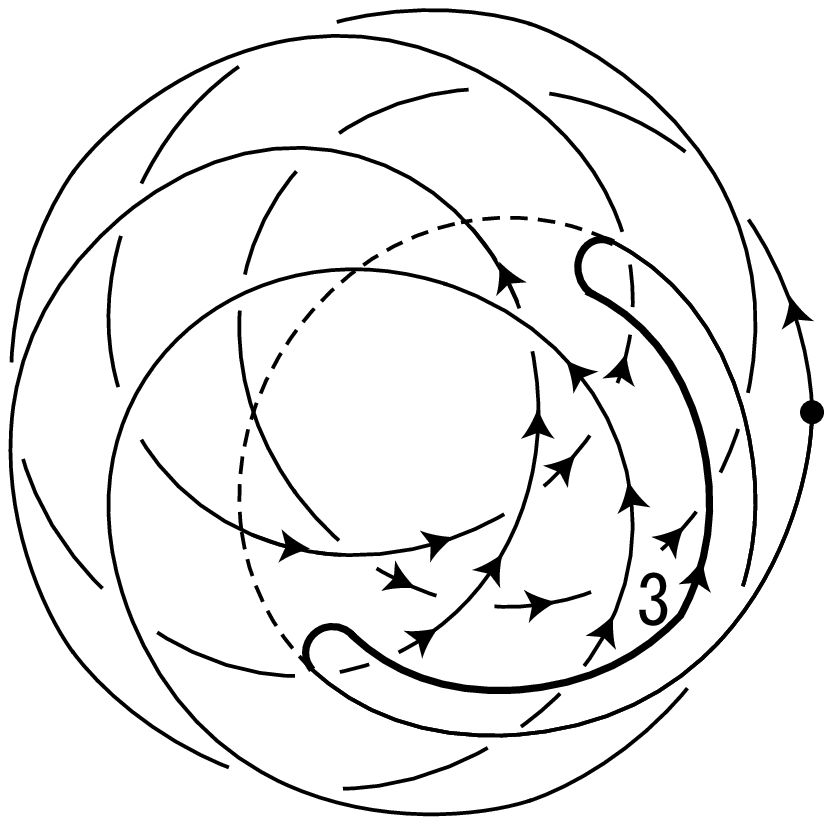}
\end{center}
\caption{}
\label{fig:deform2}
\end{figure} 

\begin{figure}[htbp]
\begin{center}
\includegraphics[width=11cm]{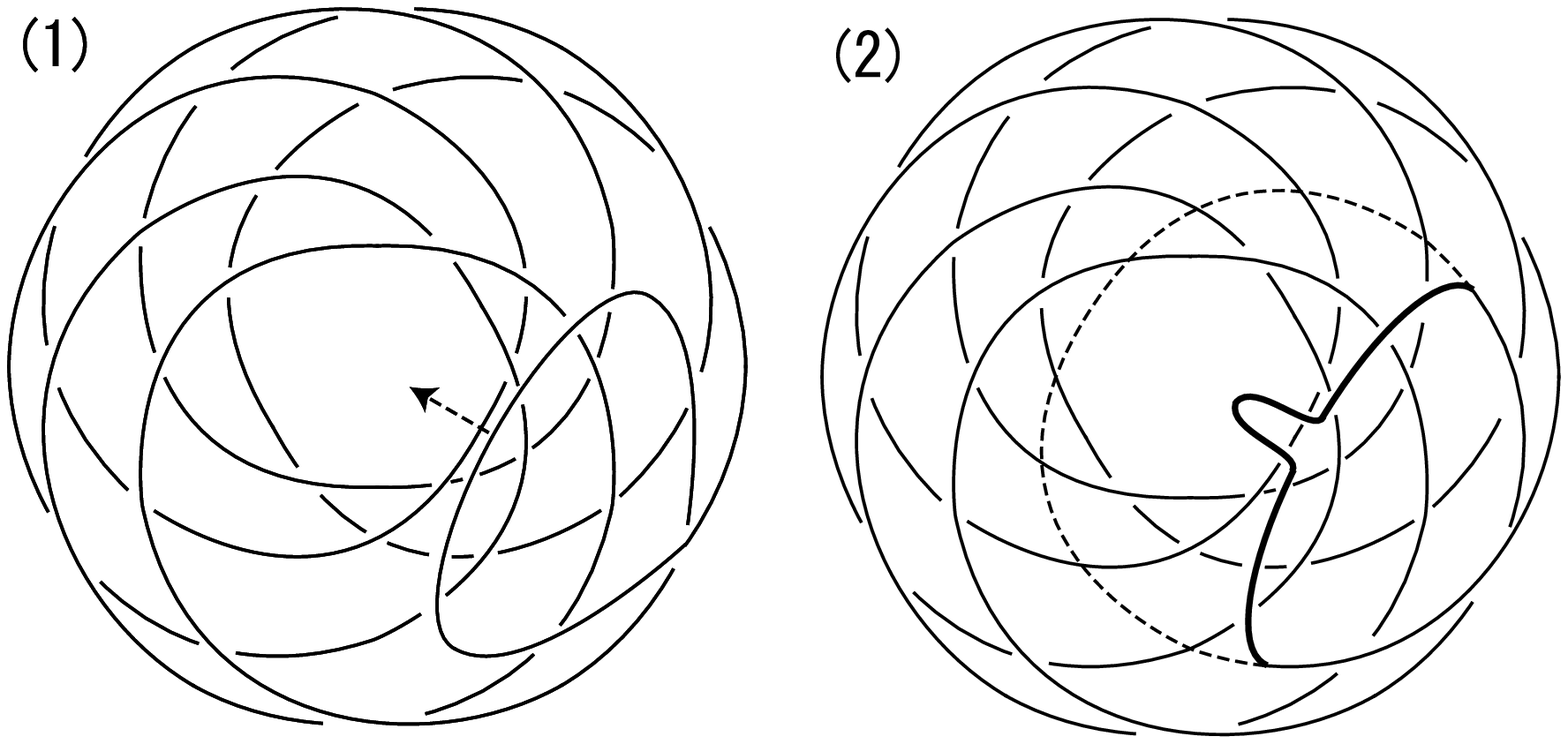}
\end{center}
\caption{}
\label{fig:deform3}
\end{figure} 

 The theorem is proved by an induction on $n$.
 In the case of $D_3$, 
the theorem can be easily confirmed.
 We assume that the theorem holds for $D_{n-1}$
and consider the case of $D_n$.
 Note that the diagram $D(\{ 1,2, \cdots, n-1 \})$ is equivalent to $D_{n-1}$.
 See Figure \ref{fig:deform1}(1).
 We begin with $D(\{ 1,2,\cdots, n-1 \})$,
and deform the subarc $\gamma_n$ to obtain the diagram $D_n$.
 First, we apply an RI move to $\gamma_n$ 
to create the kink $\lambda_n$ with a positive crossing.
 See Figure \ref{fig:deform1}(2).
 We enlarge the monogon bounded by $\lambda_n$.
 Let $\lambda_n$ keep on denoting
the subarc of the knot diagram obtained from $\lambda_n$
by the deformation below.
 We denote by $R(i)$
the RII move between the arc $\mu_i$ and $\lambda_n$,
and by $R(i,j)$
the RIII move on the arcs $\mu_i$, $\mu_j$ and $\lambda_n$.
 The first enlargement of the monogon bounded by $\lambda_n$
is done by the sequence of RII moves
$R(1), R(2), \cdots, R(k)$ and $R(n-1), R(n-2), \cdots, R(\ell)$,
where $k=(n-1)/2$ and $\ell = k+1$ when $n$ is odd, 
and $k=(n-2)/2$ and $\ell = k+2$ when $n$ is even.
 See Figure \ref{fig:deform1}(2) and Figure \ref{fig:deform2}.
 These RII moves are performed 
along the arcs parallel to $\mu_1$ and $\mu_{n-1}$
as shown in Figure \ref{fig:deform1}(2).
 Then, as in Figure \ref{fig:deform2},
we deform the arc drawed in a bold line to that in a broken line.
 Precisely, we first perform RIII moves
$R(1,n-1)$ 
along subarcs of $\mu_1$ and $\mu_{n-1}$, 
$R(1,n-2)$, $R(2,n-1)$, $R(2,n-2)$ 
along subarcs of $\mu_2$ and $\mu_{n-2}$, 
$R(1,n-3)$, $R(3,n-1)$, $R(2,n-3)$, $R(3,n-2)$, $R(3,n-3)$ 
along subarcs of $\mu_3$ and $\mu_{n-3}$, 
$R(1,n-4)$, $R(4,n-1)$, $R(2,n-4)$, $R(4,n-2)$, 
$R(3,n-4)$, $R(4,n-3)$, $R(4,n-4)$ 
along subarcs of $\mu_4$ and $\mu_{n-4}$, 
$\cdots$, 
$R(1,n-k)$, $R(k,n-1)$, $R(2,n-k)$, $R(k,n-2)$, $\cdots$, 
$R(k-1,n-k)$, $R(k,n-(k-1))$, $R(k,n-k)$
along subarcs of $\mu_k$ and $\mu_{n-k}$. 
 See Figure \ref{fig:deform3}(1).

 When $n$ is odd,
we further perform RIII moves
$R(k-1,k)$, $R(k-2,k)$, $\cdots$, $R(1,k)$
along a subarc of $\mu_k$,
$R(k+1,k+2)$, $R(k+1,k+3)$, $\cdots$, $R(k+1,n-1)$
along a subarc of $\mu_{k+1}$,
$R(k-2,k-1)$, $R(k-3,k-1)$, $\cdots$, $R(1,k-1)$
along a subarc of $\mu_{k-1}$,
$R(k+2,k+3)$, $R(k+2,k+4)$, $\cdots$, $R(k+2,n-1)$
along a subarc of $\mu_{k+2}$,
$\cdots$,
$R(1,2)$
along a subarc of $\mu_2$,
$R(n-2,n-1)$
along a subarc of $\mu_{n-2}$.
 Thus $D_n$ is obtained.

 When $n$ is even, we do the RII move $R(k+1)$.
 See Figure \ref{fig:deform3}.
 Then, we apply RIII moves
$R(k,k+1)$, $R(k-1,k+1)$, $\cdots$, $R(1,k+1)$
along a subarc of $\mu_{k+1}$,
$R(k+1,k+2)$, $R(k+1, k+3)$, $\cdots$, $R(k+1, n-1)$
along a subarc of $\mu_{k+1}$,
$R(k-1,k)$, $R(k-2,k)$, $\cdots$, $R(1,k)$
along a subarc of $\mu_k$,
$R(k+2,k+3)$, $R(k+2,k+4)$, $\cdots$, $R(k+2,n-1)$
along a subarc of $\mu_{k+2}$,
$R(k-2,k-1)$, $R(k-3,k-1)$, $\cdots$, $R(1,k-1)$
along a subarc of $\mu_{k-1}$,
$R(k+3,k+4)$, $R(k+3,k+5)$, $\cdots$, $R(k+3,n-1)$
along a subarc of $\mu_{k+3}$,
$\cdots$,
$R(1,2)$
along a subarc of $\mu_2$,
$R(n-2,n-1)$
along a subarc of $\mu_{n-2}$.
 Thus we obtain $D_n$.

 In both cases,
we have performed a single RI move,
${}_{n-1} C_1 = n-1$ RII moves 
and ${}_{n-1} C_2 = (n-2)(n-1)/2$ RIII moves
to deform $\gamma_n$ to $\mu_n$.
 (In fact, the RII move $R(i)$ has been performed
for every integer $i$ with $1 \le i \le n-1$,
and the RIII move $R(i,j)$ has been performed
for every pair of integer $i,j$ with $1 \le i < j \le n-1$.)
 We do ${}_{n-1} C_m$ Reidemeister moves of the $m$-th type to obtain $D_{n-1}$.
 Hence the formula
${}_{n-1} C_m + {}_{n-1} C_{m-1} = {}_n C _m$
implies the theorem. 
\end{proof}



\bibliographystyle{amsplain}

\medskip

\noindent
All authors: 
Department of Mathematical and Physical Sciences,
Faculty of Science, Japan Women's University,
2-8-1 Mejirodai, Bunkyo-ku, Tokyo, 112-8681, Japan.

\vspace{2mm}

\noindent
Chuichiro Hayashi:
hayashic@fc.jwu.ac.jp

\noindent
Miwa Hayashi:
miwakura@fc.jwu.ac.jp

\noindent
Minori Sawada:
m0716060sm@ug.jwu.ac.jp

\noindent
Sayaka Yamada:
m0716131ys@ug.jwu.ac.jp

\end{document}